\newtheorem{theorem}{Theorem}[section]
\newtheorem{lemma}[theorem]{Lemma}
\newtheorem{observation}[theorem]{Observation}
\newtheorem{proposition}[theorem]{Proposition}
\newtheorem{conjecture}[theorem]{Conjecture}
\newtheorem{problem}[theorem]{Problem}
\author
{
Raphael Steiner 
}
\thanks{Department of Computer Science, Institute of Theoretical Computer Science, ETH Z\"{u}rich, Switzerland,  \texttt{raphaelmario.steiner@inf.ethz.ch}. The research of the author is funded by the Ambizione Grant No. 216071 of the Swiss National Science Foundation.}
\date{\today}
\title{On the difference between the chromatic and cochromatic number}
\begin{document}
\maketitle

\begin{abstract}
The \emph{cochromatic number} $\zeta(G)$ of a graph $G$ is the smallest number of colors in a vertex-coloring of $G$ such that every color class forms an independent set or a clique. In three papers written around 1990, Erd\H{o}s, Gimbel and collaborators raised several open problems regarding the relationship of the chromatic and cochromatic number of a graph. In this short note, we address several of these problems, in particular 
\begin{itemize}
    \item we disprove a conjecture of Erd\H{o}s, Gimbel and Straight from 1988,
    \item answer negatively a problem posed by Erd\H{o}s and Gimbel in 1993, and
    \item give positive evidence for a 1000\$--question of Erd\H{o}s and Gimbel. 
\end{itemize}
\end{abstract}

\section{Introduction}
 While the \emph{chromatic number} $\chi(G)$ of a graph $G$ is famously defined as the minimum size of a partition of the vertex-set $V(G)$ into independent sets, the \emph{cochromatic number} $\zeta(G)$ of a graph $G$, introduced by Lesniak and Straight in 1977~\cite{lesniakstraight1977}, is defined as the minimum size of a partition of $V(G)$ into sets that are \emph{homogeneous} (i.e., independent or a clique). 
By definition, we  clearly have $\zeta(G)\le \chi(G)$ for every graph $G$. While complete graphs are examples showing that in general no inverse relationship holds, in a sequence of papers published around 1990~\cite{erdosgimbel1993,erdosgimbelstraight1990chromatic, erdosgimbelkratsch91}, Erd\H{o}s, Gimbel and collaborators studied the difference between those two notions of coloring from various perspectives, and established several non-trivial bounds on the difference $\chi(G)-\zeta(G)$. Along the way, they posed a number of interesting conjectures and problems, some of which we address in this note.

\medskip

\paragraph{\textbf{Graphs of bounded clique number.}} In 1988~\cite{erdosgimbelstraight1990chromatic}, Erd\H{o}s, Gimbel and Straight proved that the difference $\chi(G)-\zeta(G)$ can be upper-bounded in terms of the clique number $\omega(G)$ of the graph. Concretely, for every integer $n>2$ they defined $f(n)$ as the smallest integer such that every graph $G$ other than $K_{n-1}$ with $\omega(G)<n$ satisfies $\chi(G)\le \zeta(G)+f(n)$. They proved that $f(n)$ is well-defined and grows exponentially with $n$. They also gave independent proofs that $f(3)=0$ and $f(4)=1$, but the precise value of $f(n)$ for $n\ge 5$ remains unknown. As a first contribution, we observe the (conceptually interesting) fact that for every fixed $n\ge 5$, determining the precise value $f(n)$ reduces to a finite computation.

\begin{proposition}\label{prop:main2}
Let $n\ge 5$ and $f\ge n-2$ be integers. Then there exists an explicit constant $N=N(n,f)\in \mathbb{N}$ such that $f(n)\le f$ if and only if $\chi(G)\le \zeta(G)+f$ for all graphs $G$ with $\omega(G)<n$ of order at most $N$.
\end{proposition}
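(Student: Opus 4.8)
The plan is to treat the two implications separately; the forward (\emph{only if}) direction is immediate, while the reverse (\emph{if}) direction is proved by contraposition through an analysis of a counterexample of minimum order. For \emph{only if}, suppose $f(n)\le f$. By the definition of $f(n)$, every graph $G\neq K_{n-1}$ with $\omega(G)<n$ satisfies $\chi(G)\le\zeta(G)+f(n)\le\zeta(G)+f$, while $K_{n-1}$ itself satisfies $\chi(K_{n-1})-\zeta(K_{n-1})=(n-1)-1=n-2\le f$ by the hypothesis $f\ge n-2$. Hence $\chi(G)\le\zeta(G)+f$ holds for all graphs with $\omega(G)<n$, and in particular for all those of order at most $N$, no matter how $N$ is eventually chosen.

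For \emph{if} I would argue contrapositively: assuming $f(n)>f$, I must exhibit a counterexample of order at most $N$. First I would fix a graph $G$ with $\omega(G)<n$ and $\chi(G)\ge\zeta(G)+f+1$ of smallest possible order (such $G$ exists since $f(n)>f$, and $G\neq K_{n-1}$ because $\chi(K_{n-1})-\zeta(K_{n-1})=n-2\le f$). The decisive structural observation is that in a minimum cochromatic partition of $G$ every class must be a clique on at least two vertices. Indeed, if some class $A$ were a single vertex or a nontrivial independent set, then deleting $A$ would give $\zeta(G-A)\le\zeta(G)-1$ and $\chi(G-A)\ge\chi(G)-1$, whence $\chi(G-A)-\zeta(G-A)\ge\chi(G)-\zeta(G)\ge f+1$; as $G-A$ is a strictly smaller graph with $\omega(G-A)<n$ and is not $K_{n-1}$, this contradicts the minimality of $G$. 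Consequently $G$ is partitioned into $k:=\zeta(G)$ cliques, each of order between $2$ and $\omega(G)\le n-1$, so $|V(G)|\le (n-1)k$. Thus everything reduces to bounding $k$ by an explicit function $B=B(n,f)$, after which $N:=(n-1)B$ works. Minimality also forces $\chi(G)=\zeta(G)+f+1$ exactly and makes $G$ vertex-critical for the chromatic number; in particular deleting any single clique drops $\chi$ by at least $2$, a fact I would use to explain why a naive clique deletion does not suffice.

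The main obstacle is precisely the bound on the number $k$ of cliques $C_1,\dots,C_k$ covering $G$. My plan here is a Ramsey-type argument based on the observation that, since each clique has at most $n-1$ vertices, there are only boundedly many possible adjacency patterns between an ordered pair of cliques. The aim is to show that once $k$ exceeds an explicit Ramsey threshold one can locate two cliques $C_i,C_j$ that are \emph{interchangeable}: of equal size, nonadjacent to each other, and attached to the rest of $G$ in exactly the same way. For such a pair, any proper colouring of $G-C_j$ extends to $G$ by copying onto $C_j$ the colours used on $C_i$, so $\chi(G-C_j)=\chi(G)$; since also $\zeta(G-C_j)\le k-1$, the graph $G-C_j$ is a strictly smaller counterexample, contradicting minimality. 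The honest difficulty lies in forcing the full \emph{external} interchangeability (agreement of the two cliques on all remaining vertices, not merely on each other), which cannot be read off from a single pairwise colouring; I would handle this by an iterated or partite Ramsey argument on the bounded set of clique-interaction types, using the vertex-criticality of $G$ to control the relevant degrees. The finiteness of $f(n)$ guarantees that this reduction terminates, and bookkeeping on the Ramsey numbers involved yields the explicit value of $B(n,f)$, hence of $N=(n-1)B(n,f)$, completing the reduction to a finite check.
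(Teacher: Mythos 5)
Your forward direction and the first half of your reverse direction coincide with the paper's proof: take a counterexample $G$ of minimum order, observe that no class of an optimal cochromatic partition can be independent (deleting it would produce a smaller counterexample), and conclude that $G$ is partitioned into $k=\zeta(G)$ cliques, so $|V(G)|\le (n-1)k$. The gap is in your plan for bounding $k$. Finding two cliques that are \emph{externally} interchangeable is not a statement that pigeonhole or Ramsey arguments over a bounded alphabet can deliver: the external type of a clique (its attachment pattern to \emph{all} remaining vertices) ranges over a set of size exponential in $k$, not a set bounded in terms of $n$ and $f$. Ramsey's theorem applied to the bounded set of \emph{pairwise} clique-interaction patterns only yields a subfamily of cliques that interact identically \emph{with each other}; it says nothing about their neighbourhoods in the rest of the graph. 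Moreover, the structural claim your plan needs --- that large $k$ forces an interchangeable pair in any $K_n$-free graph partitioned into cliques --- is false: take vertices $a_1,b_1,\dots,a_k,b_k$ with $a_i$ adjacent to $b_j$ if and only if $i\le j$; this graph is triangle-free, is partitioned by the $k$ cliques $\{a_i,b_i\}$, and no two of these cliques are even nonadjacent to each other, for arbitrarily large $k$. Hence the existence of twins could only come from exploiting the counterexample property $\chi(G)\ge\zeta(G)+f+1$ itself, and your sketch (``iterated or partite Ramsey argument \dots using the vertex-criticality'') supplies no mechanism for this; that is exactly where the proof is missing. Your appeal to the finiteness of $f(n)$ --- itself a nontrivial theorem of Erd\H{o}s, Gimbel and Straight --- also has no identifiable role in the reduction.

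The paper closes this gap with a short counting argument instead of structure-finding. Since $\omega(G)<n$, every induced subgraph on at least $R(n,n)$ vertices contains an independent set of size $n$; removing such sets greedily gives $\chi(G)\le\left\lceil\frac{|V(G)|-R(n,n)+1}{n}\right\rceil+g(n)$, where $g(n)$ is the maximum chromatic number of a $K_n$-free graph on fewer than $R(n,n)$ vertices. Combining this with $\zeta(G)=k\ge |V(G)|/(n-1)$, which follows from the clique partition, yields $\chi(G)-\zeta(G)< g(n)+1-\frac{R(n,n)-1}{n}-\frac{|V(G)|}{n(n-1)}$. The right-hand side \emph{decreases} linearly in $|V(G)|$, so the assumption $\chi(G)\ge \zeta(G)+f+1$ forces $|V(G)|$ below the explicit threshold $N(n,f)=\max\left\{n(n-1)\left(g(n)-\frac{R(n,n)-1}{n}-f\right),\, R(n,n)-1\right\}$, which is the desired bound. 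If you replace your twin-clique step by this counting step, the rest of your argument goes through unchanged.
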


To see that Proposition~\ref{prop:main2} indeed shows that for every fixed integer $n\ge 5$, determining the value of $f(n)$ boils down to a finite computation, note that Erd\H{o}s, Gimbel and Straight showed in~\cite{erdosgimbelstraight1990chromatic} that $f(n)\ge 3(n-3)/2$ if $n$ is odd and $f(n)\ge (3n-10)/2$ if $n$ is even, implying $f(n)\ge n-2$ for $n\ge 5$. Hence, for every of the finitely many values of $f$ that lie between $n-2$ and the upper bound on $f(n)$ given in~\cite{erdosgimbelstraight1990chromatic}, one can use Proposition~\ref{prop:main2} to determine in finite time whether $f(n)\le f$, and hence find the value of $f(n)$. At least in the case of $f(5)$, this may indeed be a practically feasible approach. The short proof of Proposition~\ref{prop:main2} is presented in Section~\ref{sec:proof}. 

Regarding bounds on the value of $f(5)$, Erd\H{o}s, Gimbel and Straight found several examples of $K_5$-free graphs $G\not\simeq K_4$ such that $\chi(G)>\zeta(G)+2$, showing that $f(5)\ge 3$. However, all their examples have $\zeta(G)\le 3$ and they noted that they could not find any examples of such graphs with $\zeta(G)>3$. This led them to pose the following conjecture, which, if true, would be best-possible. 

\begin{conjecture}[Erd\H{o}s, Gimbel, Straight 1988~\cite{erdosgimbelstraight1990chromatic}]\label{conj:main}
Every graph $G$ with $\omega(G)<5$ and $\zeta(G)>3$ satisfies $\chi(G)\le \zeta(G)+2$. 
\end{conjecture}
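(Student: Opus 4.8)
Since the abstract announces that this conjecture is \emph{disproved}, the honest reading of the task is that the statement is false and the goal is to build a counterexample rather than a proof: I must exhibit a graph $G$ with $\omega(G)\le 4$, $\zeta(G)\ge 4$, and $\chi(G)\ge \zeta(G)+3$. The known Erd\H{o}s--Gimbel--Straight examples already realize a gap of $3$, but only with $\zeta(G)=3$; the real task is thus to \emph{inflate} the cochromatic number past $3$ while retaining a gap of at least $3$. The reason the naive disjoint union fails is instructive: independent sets of two components can always be merged, so a disjoint union does not raise $\zeta$. The plan is instead to force the independent sets to stay ``local'' by taking a join.

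Concretely, let $H$ be a triangle-free graph with $\chi(H)=k$, and set $G=H+H$, the join of two disjoint copies $H_1,H_2$ of $H$ with all edges present between them. Then $\omega(G)=\omega(H_1)+\omega(H_2)=4$, so $G$ is $K_5$-free, and $\chi(G)=2k$. I would prove that $\zeta(G)=2k-3$; this yields a gap of exactly $3$ and, for $k\ge 4$, a cochromatic number $2k-3\ge 5>3$, directly contradicting the conjectured inequality $\chi\le\zeta+2$.

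For the lower bound $\zeta(G)\ge 2k-3$, the key structural observations are that an independent class must lie inside a single copy (all cross-edges are present) and that a clique class meets each copy in a clique of the triangle-free graph $H$, hence in at most an edge. Thus the clique classes meet $H_1$ in a matching, which is $2$-colorable, so if $a_1$ independent classes lie in $H_1$ then $\chi(H)\le a_1+2$, giving $a_1\ge k-2$; symmetrically $a_2\ge k-2$. A short case analysis on the number $b$ of clique classes finishes it: if $b\ge 1$ then the total is $a_1+a_2+b\ge 2k-3$, while if $b=0$ each copy must be properly colored, forcing $\ge 2k$ classes.

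The matching upper bound is where the whole construction is decided, and it is the step I expect to be the main obstacle. One would exhibit the cocoloring $(a_1,a_2,b)=(k-2,k-2,1)$: take a single clique class equal to the $K_4$ spanned by an edge $u_iv_i$ in each copy, and color the remainder $H_i\setminus\{u_i,v_i\}$ with $k-2$ independent sets. Since $a_1=a_2=k-2$ and $b=1$ is forced to be the \emph{only} way to reach $2k-3$, this works precisely when $H$ has an edge $uv$ with $\chi(H-u-v)=k-2$, i.e.\ both endpoints of a single edge are critical. Because deleting two vertices lowers the chromatic number by at most $2$, the requirement is that one edge realize the full drop, which is a genuinely delicate condition on a triangle-free graph (and there is no slack, since the lower bound already equals $2k-3$). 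I expect this to be resolved either by a tailored Mycielski-- or Haj\'os--type gadget, or by verifying it on an explicit small triangle-free $4$-- or $5$--chromatic graph through a finite check, entirely in the spirit of the reduction in Proposition~\ref{prop:main2}.
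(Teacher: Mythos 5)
You correctly read the task as building a counterexample, and your join construction does give $\omega(G)=4$ and $\chi(G)=2k$ for free. But the step you yourself flag as decisive is not merely delicate --- it is provably impossible. In a triangle-free graph $H$, no edge $uv$ can satisfy $\chi(H-u-v)=\chi(H)-2$. Indeed, let $c$ be a proper coloring of $H-u-v$ with colors $1,\dots,m$. Triangle-freeness gives $N(u)\cap N(v)=\emptyset$, and $N(v)$ is an independent set, so $W:=N(v)\cap c^{-1}(1)$ is independent and disjoint from $N(u)$. Recolor $W$ with a new color $m+1$, then color $v$ with color $1$ (its neighbors now carry colors in $\{2,\dots,m+1\}$) and $u$ with color $m+1$ (its neighbors keep colors in $\{1,\dots,m\}$, and $v$ has color $1$). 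This is a proper $(m+1)$-coloring, so $\chi(H)\le\chi(H-u-v)+1$ for \emph{every} edge of \emph{every} triangle-free graph. Hence the cocoloring pattern $(k-2,k-2,1)$ you need can never be realized, and no Mycielski- or Haj\'os-type gadget or computer search will find the graph $H$ your plan requires. Your lower-bound argument also has a flaw: the vertices covered by the clique classes inside $H_1$ can be partitioned into at most $b$ edges and singletons, but the subgraph they \emph{induce} in $H_1$ need not be $2$-colorable (three clique classes can cover an induced $C_5$), so ``$\chi(H)\le a_1+2$'' is unjustified; the correct route is $\chi(H)\le\chi(H-S_1)+\chi(H[S_1])$ together with bounds on chromatic numbers of small triangle-free graphs, which happens to salvage $\zeta(G)\ge 2k-3$ but also shows that $(k-2,k-2,1)$ is not the only pattern attaining $2k-3$: three clique classes inducing a $3$-chromatic $6$-vertex trace in each copy would as well, a variant you do not address and which faces a similar recoloring obstruction.

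It is instructive to compare with how the paper escapes this trap. The paper's gadget starts from exactly your construction with $k=3$, the join of two copies of $C_5$ (which by itself only reproduces the known examples with $\zeta=3$, $\chi=6$), and then raises $\zeta$ above $3$ not by making the joined factors more chromatic --- which your obstruction shows cannot work --- but by \emph{attaching new independent vertices}: first an extra vertex $v$ of degree $2$, giving an $11$-vertex graph $H$ with $\chi(H)=6$, $\omega(H)=4$, a partition into $3$ cliques, and the key ``rainbow'' property that every proper $6$-coloring of $H$ contains a set $X$ with $\omega(H[X])\le 3$ receiving all six colors; then, for every such $X$, a pendant independent set $V_X$ joined exactly to $X$. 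The pendant sets keep $\omega(G)\le 4$ and force $\chi(G)=7$ (every $6$-coloring of $H$ is killed by some $V_X$), while a counting argument shows the added vertices push $\zeta(G)$ up to exactly $4$. The mechanism --- forcing a rainbow low-clique set in every coloring and hanging independent sets on it --- is the idea your framework is missing.
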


This $36$-year old conjecture has been repeated several times, such as by Erd\H{o}s and Gimbel~\cite{erdosgimbel1993} in 1993, by Jensen and Toft in their book on Graph Coloring Problems (see~\cite{jensentoft}, Problem~17.3), and it also appears on the Erd\H{o}s problems website maintained by Thomas Bloom as \href{https://www.erdosproblems.com/762}{Problem No.~762}. In 2006, Liu, Chen and Ou~\cite{xinsheng2006lower} proved the conjecture for a class of line graphs.

A closely related open problem was posed by Erd\H{o}s and Gimbel in 1993~\cite{erdosgimbel1993}, who state that they constructed several hundred examples of graphs $G$ with $\omega(G)<5$ and $\chi(G)>\zeta(G)+2$ but suspect that there may be only finitely many such $G$.
\begin{problem}[Erd\H{o}s and Gimbel 1993~\cite{erdosgimbel1993}]\label{prob:finite}
Are there finitely many graphs $G$ with $\omega(G)<5$ and $\chi(G)>\zeta(G)+2$? 
\end{problem}

As the main result of this paper, we disprove Conjecture~\ref{conj:main} and negatively answer Problem~\ref{prob:finite} via the following result.

\begin{theorem}\label{thm:main}
There are infinitely many graphs $G$ with $\omega(G)<5$, $\zeta(G)=4$ and $\chi(G)=7$. 
\end{theorem}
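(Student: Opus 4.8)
The plan is to reduce the problem to a single construction step by first establishing a clean structural dichotomy. The key observation is that the lower bound $\zeta(G)\ge 4$ is essentially \emph{automatic} for the graphs we seek, so all the work lies in producing $\chi(G)=7$ while keeping $\omega(G)\le 4$. Concretely, I would first prove the following reduction lemma: if $\omega(G)\le 4$, $\chi(G)\ge 7$, and $|V(G)|\ge 13$, then $\zeta(G)\ge 4$. To see this, suppose $\zeta(G)\le 3$, so $V(G)$ partitions into $a$ cliques and $b$ independent sets with $a+b\le 3$. If $a\le 2$, the union of the $a$ clique-classes is a union of at most two cliques, hence (being the complement of a bipartite graph) perfect, so its chromatic number equals its clique number, at most $4$; adding the $b\le 3-a$ independent classes gives $\chi(G)\le 4+(3-a)\le 6$ in every case $a\in\{0,1,2\}$, a contradiction. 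Thus $a=3$ and $b=0$, but then $V(G)$ is covered by three cliques of size $\le 4$, forcing $|V(G)|\le 12$, again a contradiction. Combined with the trivial bound $\zeta(G)\le\chi(G)$, this reduces Theorem~\ref{thm:main} to constructing \emph{infinitely many} $K_5$-free graphs $G$ with $\chi(G)=7$ that admit a vertex-partition into one $K_4$ and three independent sets.

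The point of demanding such a partition is that it simultaneously delivers two of the three required facts for free. Writing $Q$ for the $K_4$ and $R=G-Q$ for the $3$-colorable remainder, the partition into $Q$ and a proper $3$-coloring of $R$ is a cochromatic partition of size $4$, giving $\zeta(G)\le 4$; and coloring $R$ with three colors and $Q$ with four fresh colors gives $\chi(G)\le 7$. Hence, once $|V(G)|\ge 13$, the three inequalities $\zeta\le 4$, $\zeta\ge 4$, and $\chi\le 7$ all hold, and the \emph{entire} content becomes the single lower bound $\chi(G)\ge 7$, i.e. the non-existence of a proper $6$-coloring of $G$.

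To attack $\chi(G)\ge 7$ I would reformulate it as a list-coloring statement on $R$. Up to permuting colors, any $6$-coloring assigns the vertices $q_1,\dots,q_4$ of $Q$ the colors $1,2,3,4$, so each $r\in R$ may use only the colors in $L(r)=A_r\cup\{5,6\}$, where $A_r\subsetneq\{1,2,3,4\}$ records the indices of the $Q$-vertices \emph{non-adjacent} to $r$. Viewing the vertices that take a private color (from $A_r$) as an ``opt-out'' set $O$ and the rest as being $2$-colored by $\{5,6\}$, one sees that $G$ is $6$-colorable if and only if there exists $O\subseteq V(R)$ with $R-O$ bipartite and $R[O]$ properly colorable from the private lists $(A_r)_{r\in O}$. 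The task is therefore to design a $3$-colorable graph $R$ together with an attachment $r\mapsto A_r$ such that \emph{no} such $O$ exists, all while respecting the $K_5$-freeness constraints that this attachment imposes: for every edge $rr'$ of $R$ one needs $|A_r\cup A_{r'}|\ge 2$, and for every triangle one needs the three sets to union to size at least $3$ (otherwise $r$, $r'$ and their common $Q$-neighbours create a $K_5$). Infinitely many examples should then come from a family of such gadgets of unbounded order, after which the reduction lemma upgrades $\chi=7$, $\omega\le 4$ into $\zeta=4$ for all but finitely many members.

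The hard part will be the construction of the forcing gadget, precisely because the $K_5$-freeness constraints pull against the forcing. If every $r$ missed exactly one $Q$-vertex (so each $A_r$ is a singleton), the edge-constraint $|A_r\cup A_{r'}|\ge 2$ would force $r\mapsto c(r)$, the unique element of $A_r$, to be a \emph{proper} coloring of $R$; but then $O=V(R)$ is a valid opt-out set and $G$ is $6$-colorable. Hence one is compelled to use vertices with $|A_r|\ge 2$ and larger lists, which are inherently more flexible and harder to block. The crux, which I expect to be the main obstacle, is to engineer odd-cycle structure in the $3$-colorable graph $R$ that cannot be destroyed by any opt-out set $O$ without forcing two adjacent opted-out vertices into a common admissible private color — a kind of ``rigidly non-bipartite yet $3$-colorable'' gadget — and then to verify carefully that the associated attachment to $Q$ never creates a $K_5$. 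Once such a gadget is found and parameterized to have arbitrarily many vertices, the remaining verifications are exactly the routine applications of the reduction lemma described above.
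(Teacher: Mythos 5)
Your preparatory work is sound: the reduction lemma (that $\omega(G)\le 4$, $\chi(G)\ge 7$ and $|V(G)|\ge 13$ force $\zeta(G)\ge 4$, via perfection of complements of bipartite graphs and the counting bound $3\cdot 4=12$) is correct, and so is the list-coloring reformulation of $\chi(G)\ge 7$ together with the $K_5$-freeness constraints $|A_r\cup A_{r'}|\ge 2$ on edges and union at least $3$ on triangles. But these are the easy parts. The proposal then stops exactly where the theorem begins: you never construct the graph $R$ and the attachment $r\mapsto A_r$ for which no opt-out set $O$ exists. Phrases like ``the hard part will be the construction of the forcing gadget,'' ``which I expect to be the main obstacle,'' and ``once such a gadget is found'' concede that the entire mathematical content --- exhibiting even one $K_5$-free graph with $\zeta=4$ and $\chi=7$ --- is missing. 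As you yourself observe, the naive attempts (singleton lists $A_r$) provably fail, so there is no default construction to fall back on; without the gadget, nothing is proved. Moreover, your framework is \emph{more} demanding than what the theorem requires: you insist on a cochromatic partition into one $K_4$ and three independent sets, and it is not at all clear that counterexamples of this particular shape exist --- the paper's counterexamples have the opposite structure (three cliques plus one independent set), so their existence gives no reassurance that your reduction target is achievable.

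For comparison, the paper resolves precisely the step you defer, and by a different mechanism. It builds an explicit $11$-vertex graph $H$ (two $5$-cycles completely joined to each other, plus one extra vertex adjacent to one vertex of each cycle) satisfying: $\omega(H)<5$, $V(H)$ partitions into three cliques, and --- the key forcing property --- every proper $6$-coloring of $H$ contains a rainbow set $X$ with $\omega(H[X])\le 3$. It then amplifies Zykov-style: for \emph{every} subset $X$ with $\omega(H[X])<4$ it attaches a nonempty independent set of new vertices whose neighborhood is exactly $X$. Any $6$-coloring of the resulting graph $G$ would leave some attached vertex with no available color, so $\chi(G)=7$, while $K_5$-freeness and $\zeta(G)=4$ follow from the three listed properties of $H$. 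The lesson is that the forcing is engineered on the \emph{clique side} of the cochromatic partition (colorings of a join of two odd cycles are rigid enough to always produce a low-clique rainbow set), rather than on the independent side as in your plan, where the flexibility of the lists $A_r\cup\{5,6\}$ works against you. To complete your proof you would need to either produce your gadget explicitly or switch to a construction of this kind.
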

The proof of Theorem~\ref{thm:main} is given in Section~\ref{sec:cex}. An interesting question that is left open is whether the conclusion of Conjecture~\ref{conj:main} remains false if we require $\zeta(G)$ to be very large.
\begin{problem}
    Let $k\ge 5$. Is there a graph $G_k$ with $\omega(G_k)<5$, $\zeta(G_k)= k$ and $\chi(G_k)=\zeta(G_k)+3$?
\end{problem}

\medskip

\paragraph*{\textbf{Random graphs.}} Moving away from graphs of bounded clique number and considering random graphs instead, Erd\H{o}s and Gimbel raised the following open problem in their paper~\cite{erdosgimbel1993} from 1993.

\begin{problem}[Erd\H{o}s and Gimbel 1993~\cite{erdosgimbel1993}]\label{prob:erdosgimbel}
For every $n\in \mathbb{N}$, let $G_n\sim G(n,\frac{1}{2})$ be an Erd\H{o}s-R\'{e}nyi random graph on $n$ vertices. Is it true that, almost surely, $$\chi(G_n)-\zeta(G_n)\rightarrow \infty$$ for $n\rightarrow \infty$?
\end{problem}

Problem~\ref{prob:erdosgimbel} is listed on the Erd\H{o}s problems page as \href{https://www.erdosproblems.com/625}{Problem No.~625}, where it is mentioned that Erd\H{o}s offered a monetary prize of 100\$ for a positive and an even higher prize of 1000\$ for a negative resolution of Problem~\ref{prob:erdosgimbel}. It is also mentioned (with more background and related problems) in a recent article by Gimbel, see Problem~2 in~\cite{gimbel16}. It is known, see~\cite{erdosgimbel1993,gimbel16} that w.h.p., we have
$$\frac{n}{2\log_2(n)}\le \zeta(G_n)\le \chi(G_n)\le (1+o(1))\frac{n}{2\log_2(n)},$$ so in particular we have that $\chi(G_n)/\zeta(G_n)\rightarrow 1$ almost surely.

As the last new contribution of this paper, we present some modest positive evidence towards Problem~\ref{prob:erdosgimbel}, showing that for infinitely many values of $n$ the difference $\chi(G_n)-\zeta(G_n)$ becomes large with probability bounded away from zero, so in particular, $\chi(G_n)-\zeta(G_n)$ is large in expectation.

\begin{theorem}\label{thm:main2}
For every $\varepsilon>0$ there exists an absolute constant $c>0$ such that for infinitely many values $n\in \mathbb{N}$ we have 
$$\mathbb{P}(\chi(G_n)-\zeta(G_n)\ge n^{1/2-\varepsilon})\ge c$$ and thus $\mathbb{E}(\chi(G_n)-\zeta(G_n))\ge \Omega(n^{1/2-\varepsilon})$.
\end{theorem}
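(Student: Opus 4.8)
The plan is to derive the gap from the known non-concentration of the chromatic number of dense random graphs. The starting point is the elementary inequality, valid for every graph $G$ and every vertex subset $L$,
$$\zeta(G)\le \chi(G-L)+\chi(\overline{G}[L]),$$
obtained by properly colouring $G-L$ and then covering $L$ by cliques of $G$ (equivalently, by independent sets of the complement $\overline{G}$). The first term will be matched against a lower bound for $\chi(G)$, while the second term is where the saving of a cochromatic over a chromatic colouring is harvested: if $L$ is small, then $L$ can be partitioned into few cliques, so that $\chi(\overline{G}[L])=o(|L|)$, which is far below the $|L|$ colours that a proper colouring of the leftover may cost.

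First I would fix $\varepsilon>0$, set $\ell_n:=n^{1/2-\varepsilon/2}$, and invoke the non-concentration results for $\chi(G(n,\tfrac12))$ due to Heckel and Heckel--Riordan. Read in the right way, these supply infinitely many $n$, a reference value $B_n$, and a constant $c_0>0$ such that on an event $E_n$ of probability at least $c_0$ the excess responsible for a large chromatic number can be localized: there is a set $L_n\subseteq V(G_n)$ with $|L_n|\le 2\ell_n$ satisfying $\chi(G_n-L_n)\le B_n$ while simultaneously $\chi(G_n)\ge B_n+\ell_n$. In words, the bulk of $G_n$ is colourable with a (large and poorly concentrated) number $B_n$ of independent sets, but the small leftover $L_n$ forces genuinely $\Omega(\ell_n)$ additional colours with constant probability. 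The point of routing both bounds through the \emph{same} quantity $B_n$ is that this ill-concentrated bulk term will cancel in the difference $\chi-\zeta$, leaving only the contribution of the leftover.

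Next I would bound $\chi(\overline{G_n}[L_n])$. Since $L_n$ is chosen adaptively from $G_n$ one cannot treat $\overline{G_n}[L_n]$ as a fresh random graph; instead I would prove, by a first-moment computation union-bounding over all $\binom{n}{\le 2\ell_n}$ small vertex sets, that with probability $1-o(1)$ every subset $W\subseteq V(G_n)$ with $|W|\le 2\ell_n$ can be partitioned into $o(|W|)$ cliques of $G_n$ (greedily extracting cliques of size $\omega(1)$, which exist in every sufficiently large subset of $G(n,\tfrac12)$). In particular $\chi(\overline{G_n}[L_n])=o(\ell_n)$. Combining this with the displayed identity gives, on $E_n$,
$$\chi(G_n)-\zeta(G_n)\ge \big(B_n+\ell_n\big)-\big(B_n+o(\ell_n)\big)=\ell_n-o(\ell_n)\ge n^{1/2-\varepsilon}$$
for all large $n$, so that $\mathbb{P}(\chi(G_n)-\zeta(G_n)\ge n^{1/2-\varepsilon})\ge c_0-o(1)$. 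Since $\chi-\zeta\ge 0$ deterministically, the bound $\mathbb{E}(\chi(G_n)-\zeta(G_n))\ge\Omega(n^{1/2-\varepsilon})$ then follows immediately.

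The main obstacle I anticipate is the interface with the non-concentration machinery in the second paragraph: making rigorous the claim that, on a constant-probability event, the excess making $\chi(G_n)$ large is witnessed by an \emph{explicitly removable} set $L_n$ of size $O(n^{1/2-\varepsilon/2})$ whose deletion returns the chromatic number to the concentrated value $B_n$. This requires opening up the proofs that $\chi(G(n,\tfrac12))$ fluctuates and extracting not merely that the value varies by $n^{1/2-o(1)}$, but that this variation is carried by a small localized leftover rather than being spread globally; once that structural statement is in hand, the clique-covering estimate and the final cancellation are routine.
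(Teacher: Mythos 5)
Your proof rests on a structural claim that the cited results do not provide, and that you never establish: namely, that the Heckel and Heckel--Riordan non-concentration theorems can be ``read in the right way'' to yield, on an event of constant probability, a small set $L_n$ with $|L_n|\le 2\ell_n$ such that $\chi(G_n-L_n)\le B_n$ while $\chi(G_n)\ge B_n+\ell_n$. The Heckel--Riordan theorem is a purely distributional statement: no deterministic sequence of intervals of length $n^{1/2-\varepsilon}$ can contain $\chi(G_{n,1/2})$ with probability tending to $1$. It says nothing about \emph{where} the excess colours come from, and in particular does not localize the upward fluctuation of $\chi$ in a removable set of $O(n^{1/2-\varepsilon/2})$ vertices whose deletion returns $\chi$ to a reference value. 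Extracting such a statement would require reproving (not merely citing) their results with substantial new structural content, and there is no reason to expect it to be true: the mechanism behind the fluctuations of $\chi(G_{n,1/2})$ is a global one (driven, e.g., by fluctuations of the edge count and of the number of large independent sets spread over the whole vertex set), not the presence of a small exceptional ``core.'' You correctly identify this as the main obstacle in your final paragraph, but the obstacle is precisely the theorem you would need, so what remains is a proof outline conditioned on an unproven (and dubious) conjecture. The remaining ingredients (the decomposition $\zeta(G)\le \chi(G-L)+\chi(\overline{G}[L])$ and the union-bound argument that every set of $O(n^{1/2})$ vertices can be covered by $o(n^{1/2})$ cliques) are fine, but they are not where the difficulty lies.

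For comparison, the paper's proof needs no structural localization at all; it uses the non-concentration theorem only as a distributional fact. From it one gets a $\delta>0$ and, for infinitely many $n$, a threshold $s^\ast$ with $\mathbb{P}(\chi(G_n)\le s^\ast)\ge \delta/2$ and $\mathbb{P}(\chi(G_n)\ge s^\ast+n^{1/2-\varepsilon})\ge \delta/2$. Since $G_{n,1/2}$ is self-complementary in distribution, the second bound also holds with $\overline{G_n}$ in place of $G_n$; and the two events ``$\chi(G_n)\ge s^\ast+n^{1/2-\varepsilon}$'' and ``$\chi(\overline{G_n})\le s^\ast$'' are both \emph{increasing} in the edge set of $G_n$, so the Harris--FKG inequality makes them hold simultaneously with probability at least $(\delta/2)^2$. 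On that event, the global bound $\zeta(G_n)\le \chi(\overline{G_n})$ (cover by cliques of $G_n$, i.e.\ independent sets of $\overline{G_n}$) immediately gives $\chi(G_n)-\zeta(G_n)\ge n^{1/2-\varepsilon}$. If you want to salvage your approach, this correlation trick is the missing idea that replaces your unproven localization step.
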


Theorem~\ref{thm:main2} is deduced from the recent breakthroughs by Heckel~\cite{heckel2021}, and Heckel and Riordan~\cite{heckel2023} on the non--concentration of the chromatic number via the Harris-FKG-inequality, the short proof is given in Section~\ref{sec:proofmain2}. It would be interesting to see whether the tools from~\cite{heckel2021,heckel2023} can be adapted to the cochromatic setting to make further progress on Problem~\ref{prob:erdosgimbel}. 

\medskip

\paragraph{\textbf{Notation and Terminology.}} For a graph $G$, we denote by $V(G)$ its vertex set and by $E(G)$ its edge set.
A subset of vertices $X$ of a graph $G$ is said to be \emph{independent} if the induced subgraph $G[X]$ has no edges, and a \emph{clique} if $G[X]$ is a complete graph. We say that $X$ is \emph{homogeneous} if $X$ is an independent set or a clique. By $\omega(G)$ we denote the \emph{clique number}, i.e. the size of a largest clique. A \emph{proper $k$-coloring} of a graph $G$ is a mapping $c:V(G)\rightarrow \{1,\ldots,k\}$ such that $c(x)\neq c(y)$ for every $xy\in E(G)$.

\section{Proof of Proposition~\ref{prop:main2}}\label{sec:proof}
In this section, we present the short proof of Proposition~\ref{prop:main2}. We start with a simple observation. By $R(n,n)$ we denote the $n$-th diagonal Ramsey number.
\begin{observation}\label{obs:ramsey}
Let $n>2$, and let $g(n)$ denote the maximum chromatic number of a graph on less than $R(n,n)$ vertices with clique number less than $n$. 

Then every graph $G$ with $\omega(G)<n$ and $|V(G)|\ge R(n,n)$ satisfies
$$\chi(G)\le \left\lceil\frac{|V(G)|-R(n,n)+1}{n}\right\rceil+g(n).$$
\end{observation}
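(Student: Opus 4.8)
The plan is to combine Ramsey's theorem with a greedy extraction of independent sets. The key observation is that, by the very definition of the diagonal Ramsey number, every graph on at least $R(n,n)$ vertices contains a clique of size $n$ or an independent set of size $n$. Since our graph $G$ satisfies $\omega(G)<n$, and every induced subgraph inherits this property, it follows that every induced subgraph of $G$ on at least $R(n,n)$ vertices must contain an independent set of size exactly $n$ (the clique alternative being excluded).

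First I would repeatedly extract independent sets of size $n$ from $G$: as long as the current remaining graph has at least $R(n,n)$ vertices, the previous remark guarantees the existence of an independent set of size $n$, which I remove and assign its own color. Since each extraction deletes exactly $n$ vertices, after removing $j$ such sets the remaining graph has $|V(G)|-jn$ vertices, and the process can be continued precisely while $|V(G)|-(j-1)n\ge R(n,n)$. A short count then shows that the number of independent sets extracted equals $\left\lfloor (|V(G)|-R(n,n))/n\right\rfloor+1$.

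Once the remaining graph $H$ has fewer than $R(n,n)$ vertices, I would stop. As $H$ is an induced subgraph of $G$, we have $\omega(H)\le\omega(G)<n$, so by the definition of $g(n)$ the graph $H$ admits a proper coloring with at most $g(n)$ colors. Using a fresh palette of colors for $H$, together with the one color per extracted independent set, properly colors all of $G$, yielding $\chi(G)\le \left\lfloor (|V(G)|-R(n,n))/n\right\rfloor+1+g(n)$.

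Finally I would reconcile this with the stated bound via the elementary identity $\left\lfloor a/n\right\rfloor+1=\left\lceil (a+1)/n\right\rceil$, valid for every integer $a\ge 0$ and every integer $n\ge 1$ (applied here with $a=|V(G)|-R(n,n)\ge 0$, which is nonnegative precisely because $|V(G)|\ge R(n,n)$), turning the floor expression into the claimed ceiling. I do not expect any genuine obstacle in this argument; the only points requiring a little care are the exact bookkeeping of how many independent sets can be extracted before the vertex count drops below $R(n,n)$, and the verification of the floor--ceiling identity, both of which are routine.
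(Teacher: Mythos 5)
Your proof is correct and follows exactly the paper's approach: repeatedly extract independent sets of size $n$ (guaranteed by Ramsey's theorem and $\omega(G)<n$) until fewer than $R(n,n)$ vertices remain, then color the remainder with $g(n)$ colors. The paper states this in two sentences and leaves the bookkeeping implicit; your careful count and the floor--ceiling identity $\lfloor a/n\rfloor+1=\lceil (a+1)/n\rceil$ simply make explicit what the paper omits.
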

\begin{proof}
As long as $|V(G)|\ge R(n,n)$, take an independent set of size $n$ (which has to exist since $\omega(G)<n$) and remove it from $G$. Repeating this process until we obtain a graph on less than $R(n,n)$ vertices yields the desired result. 
\end{proof}
We can now prove Proposition~\ref{prop:main2}.
\begin{proof}[Proof of Proposition~\ref{prop:main2}]
Define $$N=N(n,f):=\max\left\{n(n-1)\left(g(n)-\frac{R(n,n)-1}{n}-f\right), R(n,n)-1\right\}.$$ (We believe this value is far from optimal and did not put effort into minimizing it.)

By definition, we have that $f(n)\le f$ is equivalent to the statement (A) ``Every graph $G$ with $\omega(G)<n$ other than $K_{n-1}$ satisfies $\chi(G)\le \zeta(G)+f$.''
Since we assumed $f\ge n-2$, the graph $K_{n-1}$ satisfies $\chi(K_{n-1})=n-1\le 1+f=\zeta(K_{n-1})+f$, and hence statement (A) is equivalent to the statement (B) ``Every graph $G$ with $\omega(G)<n$ satisfies $\chi(G)\le \zeta(G)+f$''.

To show the proposition, it suffices to prove that every counterexample $G$ to statement (B) of minimum order $|V(G)|$ (if it exists), satisfies $|V(G)|\le N$. 

So, towards a contradiction suppose that $G$ is a smallest counterexample with $|V(G)|>N$. Let $k:=\zeta(G)$ and let  $X_1,\ldots,X_k$ be a partition into homogeneous sets. We claim that $X_i$ is a clique for every $i=1,\ldots,n$. Indeed, suppose towards a contradiction that $X_i$ is independent for some~$i$. Then $\zeta(G-X_i)=k-1$ and, since we assumed $G$ to be smallest counterexample, we have $\chi(G-X_i)\le \zeta(G-X_i)+f$. We then obtain $\chi(G)\le \chi(G-X_i)+1\le \zeta(G-X_i)+f+1=\zeta(G)+f$, a contradiction. Next, note that since $N\ge R(n,n)-1$, by Observation~\ref{obs:ramsey} we have 
$$\chi(G)< \frac{|V(G)|}{n}+g(n)-\frac{R(n,n)-1}{n}+1\le \frac{(n-1)}{n}k+g(n)-\frac{R(n,n)-1}{n}+1,$$
where we used that $|V(G)|=\sum_{i=1}^{k}|X_i|\le k(n-1)$ since $\omega(G)<n$ and every $X_i$ is a clique. But we also have $\chi(G)\ge\zeta(G)+f+1=k+f+1$, so we find (after rearranging):
$$k<n\left(g(n)-\frac{R(n,n)-1}{n}-f\right).$$
Hence, we find
$N<|V(G)|\le k(n-1)<n(n-1)\left(g(n)-\frac{R(n,n)-1}{n}-f\right)\le N,$ the desired contradiction. This concludes the proof of the proposition.
\end{proof}

\section{Counterexample to the Erd\H{o}s-Gimbel-Straight conjecture}\label{sec:cex}

In this section, we prove Theorem~\ref{thm:main}. We start by constructing an auxiliary graph $H$ with special properties, as follows.

\begin{lemma}\label{lem:aux}
There exists a graph $H$ on $11$ vertices with the following properties.

\begin{enumerate}
    \item $\omega(H)<5$,
    \item $V(H)$ can be partitioned into $3$ cliques,
    \item For every proper $6$-coloring $c:V(H)\rightarrow \{1,\ldots,6\}$, there exists a set $X\subseteq V(H)$ such that $\omega(H[X])<4$ and $c(X)=\{1,\ldots,6\}$.
\end{enumerate}
\end{lemma}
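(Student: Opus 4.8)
The plan is to construct $H$ explicitly as a small graph and verify all three properties essentially by hand (or with a short computer check). The overarching goal is to produce a gadget that, when combined across many copies, forces a large gap between $\chi$ and $\zeta$ while keeping $\omega<5$. Property (2) tells us $\zeta(H)\le 3$, so $H$ itself is a building block with very low cochromatic number; property (3) is the crux, as it says any attempt to $6$-color $H$ is witnessed on a set $X$ that is ``locally sparse'' ($\omega(H[X])<4$), which is the feature that will later obstruct cheap cochromatic colorings when gadgets are glued together. First I would fix a candidate structure: since $V(H)$ partitions into $3$ cliques and $\omega(H)<5$, each clique has at most $4$ vertices, so $11$ vertices naturally splits as something like $4+4+3$. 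The natural first attempt is to take three cliques and add a carefully chosen bipartite-style pattern of edges between them so that the whole graph has chromatic number $6$ (or at least forces the color-spreading in property (3)) while no $K_5$ appears.

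The key steps, in order, are as follows. First, write down the three cliques $A,B,C$ with $|A|=|B|=4$, $|C|=3$ (or whichever split works) and specify the cross edges between them; this immediately gives property (2) by construction and makes property (1) a finite local check that no four mutually adjacent vertices span across the parts together with a clique interior. Second, for property (3), I would argue about the structure of an arbitrary proper $6$-coloring $c$. Since each clique $A,B,C$ receives distinct colors within itself, $c$ uses at most $4+4+3=11$ color-slots but only $6$ colors total, forcing heavy reuse of colors across the parts. The goal is to show that one can always select a transversal-like set $X$ hitting all six colors while keeping $X$ free of any $K_4$. The natural tactic is: greedily pick one vertex of each color, then argue that among all such rainbow selections at least one avoids a $4$-clique; here the sparseness of cross edges (few $K_4$'s exist in $H$ by property (1) and the specific edge pattern) should guarantee room to dodge the finitely many $4$-cliques.

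The hard part will be property (3): making the edge set dense enough that every $6$-coloring is ``spread out'' in the required way, yet sparse enough that a rainbow $K_4$-free set always exists and that $\omega(H)<5$ holds. This is a genuine tension, and I expect the actual construction to be found by small-case search rather than by a clean closed-form description — the statement ``$11$ vertices'' and the specific numbers $6$ and $4$ strongly suggest the graph was discovered computationally and then verified. My proof strategy would therefore be to present the explicit adjacency (e.g.\ as an edge list or adjacency matrix), then verify (1) and (2) by direct inspection, and verify (3) either by a case analysis on how the $6$ colors distribute among the three cliques or, more honestly, by noting it can be checked by a finite enumeration over all proper $6$-colorings up to color symmetry. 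A cleaner conceptual route, if available, would be to choose $H$ so that it contains a specific $K_4$-free subgraph of chromatic number large enough that any $6$-coloring must already use all six colors on a $K_4$-free induced piece; identifying such a subgraph (for instance one built from a Grötzsch-like or Kneser-like $4$-chromatic triangle-related gadget tuned so that $\omega<4$) and embedding it compatibly with the $3$-clique cover would be the elegant version of the argument. I would attempt the conceptual route first and fall back on explicit verification.
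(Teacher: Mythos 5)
There is a genuine gap: your proposal is a strategy outline, not a proof. You never write down the graph $H$ — no edge list, no adjacency structure, only the guess that it should be three cliques of sizes $4+4+3$ with ``carefully chosen'' cross edges to be found by small-case search. Since the entire content of the lemma is the existence of such an $H$, and property (3) is (as you yourself note) the crux, a plan that defers both the construction and the verification of (3) to a future search or computer check establishes nothing. Your fallback — enumerate all proper $6$-colorings of a candidate graph — is only meaningful once a correct candidate exists, which is exactly what is missing. Moreover, your greedy ``pick a rainbow transversal and dodge the finitely many $K_4$'s'' step is not an argument: for a generic graph with a $4+4+3$ clique cover there is no reason such a $K_4$-free rainbow set exists, and no structural feature of your (unspecified) cross edges is identified that would force it.

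For comparison, the paper's construction is clean and entirely hand-checkable, and your suspicion that it ``was discovered computationally'' is off the mark. Take two disjoint $5$-cycles $C_1,C_2$, add \emph{all} edges between them, and attach one new vertex $v$ adjacent only to one vertex $x_1\in C_1$ and one vertex $x_2\in C_2$. Then $\omega<5$ because the cycles are triangle-free (so a clique meets each cycle in at most $2$ vertices) and $v$ has degree $2$; the clique cover is $\{v,x_1,x_2\}$ together with two $K_4$'s obtained by pairing matching edges of $C_1-x_1$ with matching edges of $C_2-x_2$ (cross edges all exist by the join). Property (3) then follows conceptually: since the cycles are completely joined and each needs $3$ colors, any proper $6$-coloring uses two disjoint triples of colors on $C_1$ and $C_2$, say $\{1,2,3\}$ and $\{4,5,6\}$ with $c(v)\in\{1,2,3\}$; a simple observation about proper $3$-colorings of $C_5$ yields two \emph{non-adjacent} vertices $a,b\in C_1$ whose colors together with $c(v)$ cover $\{1,2,3\}$, and one picks $u_4,u_5,u_6\in C_2$ of colors $4,5,6$. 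The set $X=\{a,b,v,u_4,u_5,u_6\}$ is rainbow, and $H[X]$ is $3$-colorable (hence $K_4$-free) because $\{u_4,u_5,u_6\}$ spans a forest inside $C_2$, $v$ attaches to at most one of them, and $\{a,b\}$ is independent. The structural levers you would need — the complete join forcing the color split, and the odd-cycle observation producing the non-adjacent rainbow pair — are precisely what your proposal lacks.
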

Before giving the proof of the lemma, we record the following useful observation.

\begin{observation}\label{obs:obvious}
For every proper $3$-coloring $c$ of $C_5$ and for every $i \in \{1,2,3\}$, there exist two non-adjacent vertices $a,b$ on $C_5$ such that $\{c(a),c(b)\}=\{1,2,3\}\setminus \{i\}$.
\end{observation}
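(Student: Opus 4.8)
The plan is to begin by pinning down the structure of an arbitrary proper $3$-coloring $c$ of $C_5$. Since $C_5$ is an odd cycle we have $\chi(C_5)=3$, so $c$ necessarily uses all three colors. Moreover each color class is an independent set, and the largest independent set of $C_5$ has only two vertices; as the three nonempty color classes partition the five vertices of $C_5$, their sizes must therefore be $2,2,1$ in some order. This structural fact is essentially the only ingredient I expect to need, and it is the step I would be most careful about, since the whole argument hinges on ruling out a color class of size $3$.

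With this in hand, I would fix $i\in\{1,2,3\}$, write $\{j,k\}=\{1,2,3\}\setminus\{i\}$, and argue by contradiction. Let $A=c^{-1}(j)$ and $B=c^{-1}(k)$ be the two corresponding color classes, and suppose that no two non-adjacent vertices realize the color pair $\{j,k\}$; equivalently, every vertex of $A$ is adjacent in $C_5$ to every vertex of $B$, so that all $|A|\,|B|$ such pairs are edges of $C_5$. I would then split into two cases according to the class sizes forced by the $(2,2,1)$ distribution. If $|A|=|B|=2$, then the four vertices of $A\cup B$ would span $|A|\,|B|=4$ distinct edges; but deleting the single remaining vertex from $C_5$ leaves an induced path $P_4$ on $A\cup B$, which has only three edges, a contradiction. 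If instead $\{|A|,|B|\}=\{1,2\}$, the unique vertex of the singleton color would have to be adjacent to both vertices of the other class, i.e.\ both of its two neighbors on $C_5$ would lie in that class; the two remaining vertices then constitute the color class of $i$ and are consecutive on $C_5$, hence adjacent, contradicting that every color class is independent.

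Since both cases reduce to counting edges among at most four prescribed vertices of $C_5$, I do not anticipate a genuine obstacle beyond carefully justifying the $(2,2,1)$ size pattern at the outset; the remainder is a short finite check, and in either case the contradiction delivers the desired non-adjacent pair $a,b$ with $\{c(a),c(b)\}=\{j,k\}$.
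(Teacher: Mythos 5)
Your proof is correct, but it takes a different route from the paper. The paper's proof normalizes: using the rotational/reflective symmetry of $C_5$ and renaming of colors, it reduces to the single canonical coloring $c(v_1)=1$, $c(v_2)=2$, $c(v_3)=3$, $c(v_4)=2$, $c(v_5)=3$, and then simply exhibits the required non-adjacent pair for each $i$ by inspection. You instead extract the structural fact that the color classes must have sizes $2,2,1$ (which the paper's canonical form encodes implicitly) and then argue by contradiction with an edge count: if all pairs between the two relevant classes were edges, you would need $4$ edges inside an induced $P_4$ in the $(2,2)$ case, or a monochromatic edge in the $(1,2)$ case. Your version avoids the ``w.l.o.g.'' symmetry step entirely --- which in the paper silently relies on the coloring being unique up to symmetry --- at the cost of a short case analysis; the paper's version is quicker but leaves both the normalization and the final check to the reader. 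Both are complete and elementary, and either would serve the application in Lemma~\ref{lem:aux} equally well.
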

\begin{proof}
Let the vertices along the cycle be enumerated as $\{v_1,v_2,v_3,v_4,v_5\}$. Then (up to symmetry and renaming of colors), we may assume w.l.o.g. $c(v_1)=1$, $c(v_2)=2$, $c(v_3)=3$, $c(v_4)=2$, $c(v_5)=3$. It is now easily observed that the claimed statement is true. 
\end{proof}

We can now prove Lemma~\ref{lem:aux}.
\begin{proof}[Proof of Lemma~\ref{lem:aux}]
Let us define $H$ as follows: We start from the disjoint union of two cycles $C_1, C_2$, each of length $5$, by adding all the edges $\{uv|u \in V(C_1), v \in V(C_2)\}$. Then, we pick two vertices $x_1\in V(C_1), x_2 \in V(C_2)$ and add a new vertex $v\notin V(C_1)\cup V(C_2)$ that is made adjacent to only $x_1$ and $x_2$.

Let us verify that this graph $H$ on $11$ vertices has the desired properties: 

Since $v$ is of degree $2$, it cannot be included in any $K_5$, and since $C_1$ and $C_2$ are triangle-free graphs, the maximum size of a clique in $H-v$ is $4$. Hence, $\omega(H)<5$. This establishes property~(1). 

To verify~(2), note that $\{v,x_1,x_2\}$ is a clique in $H$ and that $H-\{v,x_1,x_2\}$ can be partitioned into two cliques of size four (pair up the edges of perfect matchings of $C_1-x_1$ and $C_2-x_2$, respectively). 

Finally, let us verify~(3). Let $c:V(H)\rightarrow \{1,\ldots,6\}$ be a proper coloring. Then, since $\chi(C_1)=\chi(C_2)=3$ and $C_1$ and $C_2$ are fully connected to each other, they must each contain exactly $3$ of the $6$ colors. W.l.o.g. (possibly after permuting colors and due to symmetry) we may assume $c(V(C_1))=\{1,2,3\}$, $c(V(C_2))=\{4,5,6\}$ and $c(v) \in \{1,2,3\}$. By Observation~\ref{obs:obvious}, there exist non-adjacent vertices $a, b \in V(C_1)$ such that $\{c(a),c(b),c(v)\}=\{1,2,3\}$. Furthermore, we can pick $3$ vertices $u_4, u_5, u_6\in V(C_2)$ with $c(u_i)=i$ for $i \in \{4,5,6\}$. Let now $X=\{a,b,v,u_4,u_5,u_6\}$. Then clearly $c(X)=\{1,2,3,4,5,6\}$. Furthermore, we claim that $\chi(G[X])\le 3$. This can be seen as follows: Trivially, the graph $H[\{u_4,u_5,u_6\}]$ is a forest, and since $v$ is connected to at most one of these three vertices, also $H[\{v,u_4,u_5,u_6\}]$ is a forest, and thus $\chi(H[\{v,u_4,u_5,u_6\}])\le 2$. Since $\{a,b\}$ is an independent set, it follows that $\chi(H[X])\le 1+2=3$, and thus in particular $\omega(H[X])\le 3$. This verifies the statement~(3) of the lemma and concludes its proof.

\end{proof}

With Lemma~\ref{lem:aux} at hand, we can now easily complete the proof of Theorem~\ref{thm:main}. 

\begin{proof}[Proof of Theorem~\ref{thm:main}]
Let $H$ be the graph from Lemma~\ref{lem:aux} and denote by $\mathcal{X}$ the collection of all subsets $X\subseteq V(H)$ such that $\omega(H[X])<4$. Let $G$ be any graph obtained from $H$ by adding, for every $X\in \mathcal{X}$, a non-empty independent set $V_X$ of vertices, such that each vertex in $V_X$ has neighborhood $X$, and such that the sets $(V_X|X\in \mathcal{X})$ are pairwise disjoint and each disjoint from $V(H)$. Note that there are infinitely many graphs $G$ that can be built in this way.

We claim that $G$ satisfies the properties required by the theorem, i.e. $\omega(G)<5$, $\zeta(G)=4$, and $\chi(G)=7$. 

The first property is easily seen: Every clique in $G$ is either a clique in $H$ and thus of size at most $4$ since $\omega(H)<5$ by item~(1) of Lemma~\ref{lem:aux}, or it consists of one vertex $v\in V_X$ for some $X\in \mathcal{X}$ together with a clique in $H[X]$, so it is of size at most $1+\omega(H[X])<1+4=5$.

For the second property, first note that $\zeta(G)\le 4$, since by item~(2) of Lemma~\ref{lem:aux}, we can partition $V(H)$ into $3$ cliques, and since the newly added vertices $\bigcup_{X\in \mathcal{X}}V_X$ form an independent set. On the other hand, it is impossible to partition $V(G)$ into $3$ homogeneous sets. To see this, first note that since $V(H)$ can be partitioned into $3$ cliques, every independent set in $G$ contains at most $3$ vertices from $V(H)$, and since $\omega(G)<5$, every clique has size at most $4$. 
Therefore, if it were possible to cover $V(G)$ by three homogeneous sets, then since $|V(H)|=11$ and $|V(G)|>12$, exactly one of the three homogeneous sets would need to be independent, while the other two would have to be cliques of size exactly $4$ that are contained in $V(H)$. In particular, the independent set would need to contain $3$ vertices from $H$ and all of  $\bigcup_{X\in \mathcal{X}}V_X$. It is easy to see however that such a set cannot be independent, for instance since every vertex $w \in V(H)$ is connected to each vertex in $V_{\{w\}}$. Thus, we indeed have $\zeta(G)=4$, as desired.

Finally, let us show that $\chi(G)=7$. It is easy to see that $\chi(G)\le \chi(H)+1=6+1=7$. Towards a contradiction, suppose that $G$ admits a proper $6$-coloring $c:V(G)\rightarrow \{1,\ldots,6\}$. Then by item~(3) of Lemma~\ref{lem:aux}, there exists a set $X\in \mathcal{X}$ such that $c(X)=\{1,2,3,4,5,6\}$. However, now there is no available color for the vertices in $V_X$ in the proper coloring, a contradiction.

This shows that $G$ satisfies all required properties and concludes the proof.
\end{proof}
\section{Proof of Theorem~\ref{thm:main2}}\label{sec:proofmain2}
In this section, we present the short proof of Theorem~\ref{thm:main2}. We use the following anti-concentration result for the chromatic number proved by Heckel and Riordan~\cite{heckel2023}.
\begin{theorem}[cf. Theorem~5 in~\cite{heckel2023}]\label{thm:heckel}
Fix $p\in (0,1)$ and $\varepsilon>0$, and let $([s_n,t_n])_{n \in \mathbb{N}}$ be a (deterministic) sequence of intervals such that $\mathbb{P}(\chi(G_{n,p})\in [s_n,t_n])\rightarrow 1$ as $n\rightarrow \infty$. Then there are infinitely many $n$ such that $t_n-s_n>n^{\frac{1}{2}-\varepsilon}$. 
\end{theorem}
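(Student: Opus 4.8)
The plan is to argue by contradiction and to locate, at infinitely many favourable scales $n$, a genuine fluctuation of $\chi(G_{n,p})$ of order strictly larger than $n^{1/2-\varepsilon}$. Writing $L_n:=n^{1/2-\varepsilon}$, suppose the conclusion fails, so that there is an $n_0$ with $t_n-s_n\le L_n$ for all $n\ge n_0$, while $\mathbb{P}(\chi(G_{n,p})\in[s_n,t_n])\to 1$. Thus for all large $n$ the chromatic number lands in a window of width at most $L_n$ with probability tending to $1$. To contradict this it suffices to exhibit, for every $n_0$, some $n\ge n_0$ and some value $a$ with $\mathbb{P}(\chi(G_{n,p})\le a)\ge c$ and $\mathbb{P}(\chi(G_{n,p})\ge a+L_n)\ge c$ for an absolute constant $c>0$, since such a two-sided spread cannot be absorbed by any single interval of width $\le L_n$ carrying almost all of the probability mass.

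To produce this spread I would isolate a block of fresh randomness and show that $\chi$ responds to it in an anti-concentrated way. Fix a large $n$ and a subset $S\subseteq V(G_{n,p})$ of intermediate size $s=|S|$, and condition on the $\sigma$-field $\mathcal{F}$ generated by all edges except those internal to $S$. Given $\mathcal{F}$, the only remaining randomness lies in the $\binom{s}{2}$ independent internal edges of $S$, and $\chi(G_{n,p})$ is a monotone function of them. The heuristic is that, with the rest of the graph frozen, these edges force additional colour classes in an essentially additive fashion, so that the number of extra colours behaves like a sum of order $s$ weakly dependent indicators and hence, by a local central limit or Berry--Esseen estimate, fluctuates on the Gaussian scale $\sqrt{s}$. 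Taking $s$ as large as the error control permits, namely $s\approx n^{1-\varepsilon}$, yields a conditional spread of order $n^{1/2-\varepsilon/2}\gg L_n$ with probability bounded away from zero over $\mathcal{F}$, which is exactly the two-sided spread required above; via the law of total variance the same block also forces $\mathrm{Var}(\chi(G_{n,p}))=\Omega(s)$.

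Turning this heuristic into a proof is the crux, and it is precisely the deep content of Heckel and Riordan. One must control the structure of (near-)optimal colourings of $G_{n,p}$ finely enough to guarantee that resampling the internal edges of $S$ genuinely moves $\chi$ by $\Omega(\sqrt{s})$, and that this movement is not self-correcting, i.e.\ that an adversarial recolouring of the frozen part $G_{n,p}-S$ cannot compensate for it. This rests on sharp estimates for the number and overlap structure of maximum independent sets (of size $\alpha\approx 2\log_b n$ with $b=1/(1-p)$), together with a martingale and second-moment analysis of how many vertices are forced into small colour classes. The loss of the factor $n^{\varepsilon}$ relative to the Azuma upper bound of $O(\sqrt{n})$ is an artefact of the error terms in this structural central limit estimate.

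Finally, the conclusion can only be established \emph{for infinitely many $n$}, and not for all large $n$, because the anti-concentration mechanism cannot be run at every scale: at the two-point-concentrated values of $n$ the chromatic number is genuinely concentrated and the fresh randomness in $S$ is simply absorbed without changing $\chi$. The spread can be forced only at boundary scales $n$ --- those at which the expected number of proper colourings with a given number of colours transitions from $\gg 1$ to $\ll 1$, so that $\chi$ is pulled between two consecutive integers. The main obstacle is thus twofold: establishing the structural central limit estimate above, and showing that the favourable boundary scales form an infinite set along which it applies. Together these yield infinitely many $n$ with $t_n-s_n>L_n$, contradicting the assumption and proving the theorem.
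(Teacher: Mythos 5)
There is a genuine gap, and it is worth noting first that the paper does not prove this statement at all: Theorem~\ref{thm:heckel} is imported as a black box from Heckel and Riordan (Theorem~5 of~\cite{heckel2023}), and the paper's contribution lies entirely in how it is \emph{used} (via the contrapositive step ($\ast$) and the Harris-FKG inequality) to prove Theorem~\ref{thm:main2}. Your opening reduction --- assuming all but finitely many intervals are short and noting that a two-sided spread, i.e.\ a value $a$ with $\mathbb{P}(\chi\le a)\ge c$ and $\mathbb{P}(\chi\ge a+L_n)\ge c$, is incompatible with any interval of width $\le L_n$ carrying mass tending to $1$ --- is correct and is essentially the same easy contrapositive the paper itself performs when deducing ($\ast$). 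But that is the trivial direction; everything after it is asserted rather than proved.

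The central claim of your sketch --- that conditioning on all edges outside a set $S$ of size $s\approx n^{1-\varepsilon}$ and resampling the internal edges of $S$ moves $\chi(G_{n,p})$ by $\Omega(\sqrt{s})$ via a CLT-type mechanism --- is exactly the hard anti-concentration content, and you concede this yourself (``Turning this heuristic into a proof is the crux, and it is precisely the deep content of Heckel and Riordan''). A proof whose key lemma is deferred to the theorem being proved is not a proof. Moreover, the proposed mechanism is doubtful on its own terms: nothing rules out that optimal colourings reroute globally so that resampled internal edges of $S$ are absorbed without changing $\chi$ (the ``self-correcting'' issue you name but do not resolve), and the actual Heckel--Riordan argument does not proceed by resampling a vertex block; it exploits the fluctuations in the number of independent sets of near-maximum size $\alpha_0\approx 2\log_b n$ and a comparison across scales $n$, which is also where the restriction to infinitely many $n$ genuinely comes from. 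Your explanation of that restriction is additionally incorrect as stated: for dense $p$ (e.g.\ $p=\tfrac12$) there are no known ``two-point-concentrated values of $n$''; indeed Heckel and Riordan conjecture the concentration width oscillates between roughly $n^{1/4+o(1)}$ and $n^{1/2+o(1)}$, never dropping to $O(1)$. So the proposal correctly frames the problem but supplies neither the structural lemma nor the scale-selection argument, both of which constitute the entire difficulty of the cited theorem.
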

We also need the following special case of the Harris-FKG correlation inequality.
\begin{theorem}[Harris-FKG inequality, cf. Chapter 6 of~\cite{alon2015}]\label{thm:harris}
Let $\Omega=\{0,1\}^m$ for some $m\in \mathbb{N}$ be a product space and $A, B \subseteq \Omega$ events such that both $A$ and $B$ are increasing, that is, for any two $\mathbf{x},\mathbf{y}\in \Omega$ we have that $\mathbf{x} \in A$ and $\mathbf{x} \le \mathbf{y}$ (coordinate-wise) implies $\mathbf{y}\in A$, and the same holds for $B$. Then 
$$\mathbb{P}(A\cap B)\ge \mathbb{P}(A)\mathbb{P}(B).$$
\end{theorem}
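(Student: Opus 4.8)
The plan is to prove the equivalent, more flexible statement that any two real-valued functions on $\Omega=\{0,1\}^m$ that are both monotone non-decreasing with respect to the coordinate-wise partial order are positively correlated under the product measure, and then to specialize to indicator functions. I would equip $\Omega$ with the product measure in which the coordinates are independent (in our application each is a Bernoulli$(1/2)$ variable, but the argument goes through verbatim for arbitrary marginals). Concretely, I would aim to show that whenever $f,g:\Omega\to\mathbb{R}$ satisfy $f(\mathbf{x})\le f(\mathbf{y})$ and $g(\mathbf{x})\le g(\mathbf{y})$ for all $\mathbf{x}\le\mathbf{y}$, one has $\mathbb{E}(fg)\ge\mathbb{E}(f)\mathbb{E}(g)$. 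Taking $f=\mathbf{1}_A$ and $g=\mathbf{1}_B$, which are non-decreasing precisely because $A$ and $B$ are increasing, then yields $\mathbb{P}(A\cap B)=\mathbb{E}(\mathbf{1}_A\mathbf{1}_B)\ge\mathbb{E}(\mathbf{1}_A)\mathbb{E}(\mathbf{1}_B)=\mathbb{P}(A)\mathbb{P}(B)$, as desired.

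First I would treat the base case $m=1$, where $\Omega=\{0,1\}$ carries a single coordinate. The key observation is that for monotone functions $f,g$ in the same direction the product $(f(s)-f(t))(g(s)-g(t))$ is non-negative for all $s,t\in\{0,1\}$, since the two factors always share a sign. Taking the expectation of this quantity over two independent copies $s,t$ of the coordinate gives
$$0\le\mathbb{E}_{s,t}\big[(f(s)-f(t))(g(s)-g(t))\big]=2\big(\mathbb{E}(fg)-\mathbb{E}(f)\mathbb{E}(g)\big),$$
which is exactly the claimed inequality when $m=1$.

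The main work is the inductive step. Assuming the correlation inequality for products of $m-1$ coordinates, I would condition on the last coordinate. For $t\in\{0,1\}$ set $f_t(\cdot)=f(\cdot,t)$ and $g_t(\cdot)=g(\cdot,t)$, which are non-decreasing functions of the first $m-1$ coordinates, so the induction hypothesis gives $\mathbb{E}(f_t g_t)\ge\mathbb{E}(f_t)\mathbb{E}(g_t)$ for each fixed $t$. Writing $F(t)=\mathbb{E}(f_t)=\mathbb{E}(f\mid x_m=t)$ and likewise $G(t)$, monotonicity of $f$ and $g$ in the last coordinate passes through the average, so $F$ and $G$ are non-decreasing in $t$. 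Then, using that the measure factorizes over the last coordinate,
$$\mathbb{E}(fg)=\mathbb{E}_{x_m}\big[\mathbb{E}(f_{x_m}g_{x_m})\big]\ge\mathbb{E}_{x_m}\big[F(x_m)G(x_m)\big]\ge\mathbb{E}_{x_m}\big[F(x_m)\big]\mathbb{E}_{x_m}\big[G(x_m)\big]=\mathbb{E}(f)\mathbb{E}(g),$$
where the first inequality is the induction hypothesis applied inside the conditional expectation and the second is the $m=1$ case applied to the monotone functions $F,G$ of the single variable $x_m$. This closes the induction.

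I expect the only genuinely delicate point to be the bookkeeping in the inductive step: one must check that restricting to a fixed last coordinate preserves monotonicity in the remaining coordinates, so that the induction hypothesis applies, and simultaneously that averaging out the first $m-1$ coordinates preserves monotonicity in the last coordinate, so that the base case applies at the outer level. Both facts are immediate from the coordinate-wise order together with the factorization of the product measure, but keeping the two roles of monotonicity straight is where care is needed; everything else is a direct computation.
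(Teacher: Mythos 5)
Your proposal is correct: the functional reformulation, the $m=1$ computation via $\mathbb{E}_{s,t}[(f(s)-f(t))(g(s)-g(t))]\ge 0$ over two independent copies, and the induction on coordinates (sectioning at the last coordinate, applying the hypothesis to $f_t,g_t$, and the base case to the averaged functions $F,G$) are all sound, and the two monotonicity checks you flag as delicate do go through exactly as you say. Note, however, that the paper itself does not prove this statement at all --- it is quoted as a known black box with a citation to Chapter~6 of Alon--Spencer --- so there is no internal proof to compare against; your argument is essentially the classical proof of Harris's inequality, i.e.\ the one found in the cited reference, making your write-up a correct self-contained substitute for the citation rather than a divergence from the paper.
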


We are now ready for the proof of Theorem~\ref{thm:main2}.
\begin{proof}[Proof of Theorem~\ref{thm:main2}]
In the following, we denote by $G_n$ the Erd\H{o}s-Renyi random graph on $n$ vertices with edge-probability $p=\frac{1}{2}$.
We start by observing that there exists some constant $\delta \in (0,1)$ such that the following statement holds for infinitely many values of $n\in \mathbb{N}$:

\medskip

($\ast$) For every $s\in \mathbb{R}$ we have $\mathbb{P}(s\le \chi(G_n)\le s+n^{\frac{1}{2}-\varepsilon})\le 1-\delta$.

\medskip

Indeed, suppose towards a contradiction that for every $\delta>0$ the above statement is only valid for finitely many $n$.

This means that for every $\delta>0$ there exists some $n_0=n_0(\delta)\in \mathbb{N}$ and for every $n\ge n_0$ a real interval $[s_n^\delta,t_n^\delta]$ of length exactly $n^{\frac{1}{2}-\varepsilon}$ such that $$\mathbb{P}(\chi(G_n)\in [s_n^\delta,t_n^\delta])> 1-\delta.$$
W.l.o.g. assume $n_0(\delta)\rightarrow \infty$ for $\delta\rightarrow 0$.

Now, for every $n\in \mathbb{N}$ set $\delta(n):=\min\{\frac{1}{k}|n\ge n_0(\frac{1}{k}), k \in \mathbb{N}\}$. We clearly have $\delta(n)\rightarrow 0$ for $n\rightarrow \infty$ and
$$\mathbb{P}(\chi(G_n)\in [s_n^{\delta(n)},t_n^{\delta(n)}])\ge 1-\delta(n)\rightarrow 1$$ for $n\rightarrow \infty$. However, there exists no $n$ such that $t_n^{\delta(n)}-s_n^{\delta(n)}>n^{\frac{1}{2}-\varepsilon}$, a contradiction to Theorem~\ref{thm:heckel}.

Hence, we know that ($\ast$) holds for infinitely many values of $n$. Now, consider any $n\in \mathbb{N}$ for which $(\ast)$ holds and let $s^\ast:=\min\{s\in \mathbb{N}_0|\mathbb{P}(\chi(G_n)\le s)\ge \delta/2\}$. We then have
$$\mathbb{P}(\chi(G_n)\le s^\ast+n^{1/2-\varepsilon})\le \mathbb{P}(\chi(G_n)\le s^\ast-1)+\mathbb{P}(s^\ast\le \chi(G_n)\le s^\ast+n^{1/2-\varepsilon})$$
$$<\delta/2+(1-\delta)=1-\delta/2.$$
Finally, this implies that $\mathbb{P}(\chi(G_n)>s^\ast+n^{1/2-\varepsilon})\ge\delta/2$. 

In the following, denote by $\overline{G_n}$ the complement graph of $G_n$ and note that it follows the same distribution as $G_n$. Now let $A$ and $B$ be the probability events corresponding to the statements $\chi(G_n)\ge s^\ast+n^{1/2-\varepsilon}$ and $\chi(\overline{G_n})\le s^\ast$, respectively. We can consider them as events in a product probability space $\Omega=\{0,1\}^{\binom{n}{2}}$, where every coordinate corresponds to a pair of vertices and we associate an existing edge in $G_n$ with a $1$-entry at the corresponding position. We can now observe that $A$ and $B$ are both increasing events, i.e., they continue to hold when more edges are added to an instance of $G_n$. Hence, we may apply the Harris-FKG inequality (Theorem~\ref{thm:harris}) to find

$$\mathbb{P}(\chi(G_n)\ge s^\ast+n^{1/2-\varepsilon} \text{ and }\chi(\overline{G_n})\le s^\ast)=\mathbb{P}(A\cap B)\ge\mathbb{P}(A)\mathbb{P}(B)$$
$$=\mathbb{P}(\chi(G_n)\ge s^\ast+n^{1/2-\varepsilon})\mathbb{P}(\chi(\overline{G_n})\le s^\ast)$$
$$=\mathbb{P}(\chi(G_n)\ge s^\ast+n^{1/2-\varepsilon})\mathbb{P}(\chi(G_n)\le s^\ast)\ge (\delta/2)^2.$$
Finally, using the inequality $\zeta(G)\le \chi(\overline{G})$, which holds for every graph $G$, we find that 
$\mathbb{P}(\chi(G_n)-\zeta(G_n)\ge n^{1/2-\varepsilon})\ge (\delta/2)^2=:c>0$, and this concludes the proof of the theorem.
\end{proof}
\bibliographystyle{abbrvurl}
\bibliography{references}

\end{document}